\def\boxit{$\sqcap\kern-8pt\sqcup$}
\newtheorem{theorem}{Theorem}
\newtheorem{lemma}{Lemma}
\newtheorem{corollary}{Corollary}
\newcommand{\R}{\mathbb R}
\newcommand{\bases}{{\mathcal B}}
\newcommand{\indep}{{\mathcal I}}
\begin{document}

\title[Matroid base polytope decomposition II : sequences of hyperplane splits]{Matroid Base Polytope Decomposition II : sequences of hyperplane splits}
\author[Vanessa Chatelain]{Vanessa Chatelain}
\address{Institut Galil\'ee, Universit\'e Villetaneuse (Paris XIII)} 
\email{vanessa\_chatelain@hotmail.fr}
\author[Jorge Luis Ram\'{i}rez Alfons\'{i}n]{Jorge Luis Ram\'{i}rez Alfons\'{i}n}\thanks{The second author was supported by the ANR TEOMATRO grant ANR-10-BLAN 0207.}
\address{Institut de Math\'ematiques et de Mod\'elisation de Montpellier, \break 
\hspace*{.25cm} Universit\'e Montpellier 2,  Place Eug\`ene Bataillon, 34095 Montpellier}
\email{jramirez@math.univ-montp2.fr}
\urladdr{http://www.math.univ-montp2.fr/~ramirez/}

\begin{abstract}

This is a continuation of an early paper [Adv. Appl. Math. 47(2011), 158-172] about matroid base polytope decomposition.  We will present sufficient conditions on a matroid $M$ so its base polytope $P(M)$ has a {\it sequence} of {\it hyperplane} splits. These yields to decompositions of $P(M)$ with two or more pieces for infinitely many matroids $M$.  We also present necessary conditions on the Euclidean representation of rank three matroids $M$ for the existence of decompositions of $P(M)$ into $2$ or $3$ pieces. Finally, we prove that $P(M_1 \oplus M_2)$ has a sequence of hyperplane splits if either $P(M_1)$ or $P(M_2)$ also has a sequence of hyperplane splits.
\end{abstract}
\maketitle
%\thanks{}
\noindent
\textbf{Keywords:} Matroid base polytope, polytope decomposition
\smallskip

\noindent
\textbf{MSC 2010:} 05B35,52B40
%\date{\today}

\section{Introduction} \label{sec:intro} 

This paper is a continuation of the paper \cite{CR} by the two present authors.
For general background in matroid theory we refer the reader to \cite{Oxley,Welsh}. 
A {\em matroid}  $M=(E, \bases )$ of {\em rank} $r=r(M)$ is a finite set $E=\{1,\dots ,n\}$
together with a nonempty collection $\bases =\bases(M)$ of 
$r$-subsets of $E$ (called the {\em bases} of $M$) satisfying the following {\em basis exchange axiom}:
$$\text{if } B_1,B_2\in \bases \text{ and } e\in B_1\setminus B_2, \text{ then there exists } f\in B_2\setminus B_1 \text{ such that } (B_1-e)+ f \in \bases.$$
We denote by $\indep (M)$ the family of {\em independent} sets of $M$ (consisting of all subsets of bases of $M$). For a matroid $M=(E,\bases )$, the {\it matroid base polytope} $P(M)$ of $M$ is
defined as the convex hull of the incidence vectors of bases of $M$, that is,
$$
P(M):= \text{conv} \left\{\sum_{i\in B}e_{i} : B \text{~a base of~} M\right\},
$$
where $e_{i}$ is the $i^{th}$ standard basis vector in $\R^{n}$.  $P(M)$ is a
polytope of dimension at most $n-1$.  
\medskip

A {\it matroid base polytope decomposition} of $P(M)$ is a decomposition 
$$P(M) = \bigcup\limits_{i=1}^t P(M_{i})$$ 

\noindent where each $P(M_i)$ is a matroid base polytope for some matroid $M_i$
and, for each $1\le i \neq j\le t$, the intersection
$P(M_{i}) \cap P(M_{j})$ is a face of both $P(M_i)$ and $P(M_j)$.
It is known that nonempty faces of matroid base polytope are matroid base polytopes \cite[Theorem 2]{GS}. So, the common face $P(M_i)\cap P(M_j)$ (whose vertices correspond to elements of $\bases(M_i)\cap \bases(M_j)$) must also be a matroid base polytope. $P(M)$ is said to be {\it decomposable} if it admits a matroid base polytope
decomposition with $t \geq 2$ and {\em indecomposable} otherwise.
A decomposition is called {\it hyperplane split} when $t=2$.
\medskip

Matroid base polytope decomposition were introduced by Lafforgue \cite{Lafforgue1,Lafforgue2} and have appeared in many different contexts : quasisymmetric functions \cite{AFR,BJR,D,L}, compactification of the moduli space of hyperplane arrangements \cite{HKT, KT},  tropical linear spaces \cite{S1,S2}, etc.
%It is thus of interest to investigate whether a given matroid base polytope is decomposable or not. 
In \cite{CR}, we have studied the existence (and nonexistence) of such decompositions. Among other results, we presented sufficient conditions on a matroid $M$ so $P(M)$ admits a hyperplane split. This yielded us to {\em different} hyperplane splits for infinitely many matroids. 
A natural question is the following one: given a matroid base polytope $P(M)$, is it possible to find a sequence of hyperplane splits  providing a decomposition of $P(M)$? In other words, is there a hyperplane split of $P(M)$ such that one of the two obtained pieces
has a hyperplane split such that, in turn, one of the two new obtained pieces has a hyperplane split,  and so on, giving a decomposition of $P(M)$?
\medskip

In \cite[Section 1.3]{K}, Kapranov showed that all decompositions of a  (appropriately parametrized) rank-2 matroid can be achieved by a sequence of hyperplane splits. However, this is not the case in general. Billera, Jia and Reiner \cite{BJR} provided a decomposition into three indecomposable pieces of $P(W)$ where $W$ is the rank three matroid on 
$\{1,\dots ,6\}$ with $\bases(W)={{[6]}\choose 3}\setminus  \{\{1,2,3\},\{1,4,5\},\{3,5,6\}\}$. They proved that this decomposition cannot be obtained via hyperplane splits. However, we notice that $P(W)$ may admits other decompositions into three pieces that can be obtained via hyperplane splits; this is illustrated in Example 3.  
\medskip

A difficulty arising when we apply successive hyperplane splits is that the intersection $P(M_i)\cap P(M_j)$ also must be a matroid base polytope.  For instance, consider a first hyperplane split
$P(M)=P(M_1)\cup P(M'_1)$ and suppose that $P(M'_1)$ admits a hyperplane splits, say $P(M'_1)=P(M_2)\cup P(M_2')$. This sequence of 2 hyperplane splits would give the decomposition $P(M)=P(M_1)\cup P(M_2)\cup P(M_2')$ if $P(M_1)\cap P(M_2), P(M_1)\cap P(M_2')$, and $P(M_2)\cap P(M_2')$ were matroid base polytopes. By definition of hyperplane split, $P(M_2)\cap P(M_2')$ is the base polytope of a matroid,  however 
 the other two intersections  might not be matroid base polytopes. 
 Recall that the intersection of two matroids is not necessarily a matroid (for instance, $\bases(M_1)=\{\{1,3\},\{1,4\},\{2,3\},\{2,4\}\}$ and $\bases(M_2)=\{\{1,2\},\{1,3\}, \{2,3\},\{2,4\},\{3,4\}\}$ 
are matroids while $\bases(M_1)\cap \bases(M_2)=\{\{1,3\},\{2,3\},\{2,4\}\}$ is not).  

In the next section, we give sufficient conditions on $M$ so that $P(M)$ admits a sequence of $t\ge 2$ hyperplane splits. This allows us to provide decompositions of $P(M)$ with $t+1$ pieces for infinitely many matroids.  
We say that two decompositions $P(M)=\bigcup\limits_{i=1}^tP(M_i)$ and $P(M)=\bigcup\limits_{i=1}^tP(M'_i)$ are {\em equivalent} if there exists a permutation $\sigma$ of $\{1,\dots ,t\}$ such that  $P(M_i)$ is {\em combinatorially equivalent} to $P(M'_{\sigma(i)})$.
%, that is, the corresponding face lattices induced by the decompositions are isomorphic. 
They are {\em different} otherwise. We present a lower bound for the number of different decompositions of $P(U_{n,r})$ into $t$ pieces.
In Section \ref{secc:geom}, we present necessary geometric conditions (on the Euclidean representation) of rank three matroids $M$ for the existence of decompositions of $P(M)$ into $2$ or $3$ pieces. Finally, in Section \ref{sec:sum}, we show that  the {\em direct sum} $P(M_1 \oplus M_2)$ 
has a sequence of hyperplane splits if either $P(M_1)$ or $P(M_2)$ also has a sequence of hyperplane splits.

\section{Sequence of hyperplane splits}\label{sec:decomp}

Let $M=(E,\bases)$ be a matroid of rank $r$ and let $A\subseteq E$. We recall that the independent sets of the {\em restriction} of matroid $M$ to $A$, 
denoted by $M|_A$, are given by $\indep(M|_A)=\{I\subseteq A : I\in\indep(M)\}$.

Let $t\ge 2$ be an integer with $r\ge t$. Let $E=\bigcup\limits_{i=1}^t E_i$ be a $t$-partition of $E=\{1,\dots ,n\}$ and let $r_i=r(M|_{E_i})>1$, $i=1,\dots , t$. 
We say that $\bigcup\limits_{i=1}^t E_i$ is a {\em good $t$-partition} if there exist integers
$0<a_i<r_i$ with the following properties :

$(P1)$ $r=\sum\limits_{i=1}^t a_i$,
\smallskip

$(P2)$  (a)  For any  $j$ with $1\le j\le t-1$
  $$\begin{array}{ll}
\text{if } X\in \indep (M|_{E_1\cup \cdots \cup E_j})  \text{ with } |X|\le a_1
\text{ and } Y\in \indep (M|_{E_{j+1}\cup \cdots \cup E_t})  \text{ with } |Y|\le a_2, \\ \text{then } X\cup Y\in \indep(M).
\end{array}$$

\hspace*{.7cm} (b) For any pair $j,k$ with  $1\le j< k\le t-1$
$$\begin{array}{ll}
\text{if } X\in \indep (M|_{E_1\cup \cdots \cup E_j})&  \text{ with } |X|\le \sum\limits_{i=1}^j a_i,\\
Y\in \indep (M|_{E_{j+1}\cup \cdots \cup E_k}) & \text{ with } |Y|\le \sum\limits_{i=j+1}^k a_i, \\
Z\in \indep (M|_{E_{k+1}\cup \cdots \cup E_t}) & \text{ with } |Z|\le \sum\limits_{i=k+1}^ta_i, \\
\text{then }  X\cup Y\cup Z\in \indep(M).\\
\end{array}$$
\medskip

Notice that the good 2-partitions provided by $(P2)$ case (a) with $t=2$ are the {\em good partitions} defined in \cite{CR}. Good partitions were used to give sufficient conditions for the existence of hyperplane splits. The latter was a consequence of the following two results:

\begin{lemma}\label{lem-decomp1} \cite[Lemma 1]{CR} Let $M=(E,\bases)$ be a matroid of rank $r$ and 
let $E=E_1\cup E_2$ be a good $2$-partition with  integers $0<a_i< r(M|_{E_i})$, $i=1,2$. Then,

$\bases(M_1)=\{B\in \bases(M) : |B\cap E_1|\le a_1 \} \text { and } \bases(M_2)=\{B\in \bases(M) : |B\cap E_2|\le a_2 \}$

\noindent are the collections of bases of matroids.
\end{lemma}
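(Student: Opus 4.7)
My plan is to prove the statement for $\bases(M_1)$; the proof for $\bases(M_2)$ follows by symmetry, since swapping $(E_1,E_2)$ with $(E_2,E_1)$ and $(a_1,a_2)$ with $(a_2,a_1)$ preserves hypothesis $(P2)(a)$ in the case $t=2$. All elements of $\bases(M_1)$ have size $r$ (they are bases of $M$), so it suffices to check non-emptiness and the basis exchange axiom. Non-emptiness follows immediately from $(P2)(a)$: picking $X\in\indep(M|_{E_1})$ with $|X|=a_1$ and $Y\in\indep(M|_{E_2})$ with $|Y|=a_2$ (possible since $a_i<r_i$), we get $X\cup Y\in\indep(M)$ of size $a_1+a_2=r$ by $(P1)$, hence a basis of $M$ with exactly $a_1$ elements in $E_1$.

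For the exchange axiom, let $B_1,B_2\in\bases(M_1)$ and $e\in B_1\setminus B_2$. If $e\in E_1$, then $|(B_1-e)\cap E_1|=|B_1\cap E_1|-1$, so any $f\in B_2\setminus B_1$ produced by the exchange axiom in $M$ satisfies $|(B_1-e+f)\cap E_1|\le a_1$ and yields the required basis of $M_1$. Similarly, if $e\in E_2$ and $|B_1\cap E_1|<a_1$, any standard exchange partner $f$ gives $|(B_1-e+f)\cap E_1|\le |B_1\cap E_1|+1\le a_1$. The only delicate case is $e\in E_2$ with $|B_1\cap E_1|=a_1$: here any $f\in E_1$ would push the count past $a_1$, so we must produce $f\in B_2\cap E_2\setminus B_1$ with $(B_1-e)+f\in\bases(M)$.

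I would handle this case by augmentation inside the restriction $M|_{E_2}$. The set $(B_1\cap E_2)-e$ is independent in $M|_{E_2}$ with size $a_2-1$, while $B_2\cap E_2$ is independent in $M|_{E_2}$ with size at least $a_2$ (because $|B_2\cap E_1|\le a_1$ forces $|B_2\cap E_2|\ge r-a_1=a_2$). By the augmentation axiom in $M|_{E_2}$, there exists $f\in (B_2\cap E_2)\setminus((B_1\cap E_2)-e)$ such that $Y:=((B_1\cap E_2)-e)+f\in\indep(M|_{E_2})$; since $e\notin B_2$, we have $f\ne e$, hence $f\in B_2\setminus B_1$. Setting $X:=B_1\cap E_1\in\indep(M|_{E_1})$ with $|X|=a_1$ and noting $|Y|=a_2$, property $(P2)(a)$ applied with $j=1$ gives $X\cup Y=(B_1-e)+f\in\indep(M)$; this set has cardinality $r$, so it is a basis of $M$, and $|((B_1-e)+f)\cap E_1|=a_1$ confirms membership in $\bases(M_1)$. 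The main obstacle is precisely this last case: the good-partition hypothesis $(P2)(a)$ is invoked exactly where standard matroid axioms alone fail to deliver the constrained exchange.
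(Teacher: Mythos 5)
Your proof is correct, and you have correctly identified the one place where the good-partition hypothesis is genuinely needed: the case $e\in E_2$ with $|B_1\cap E_1|=a_1$, where one must force the exchange element $f$ to lie in $E_2$. Note that this paper does not reprove the lemma but cites it from the earlier work \cite{CR}; your argument (non-emptiness via $(P2)(a)$, exchange in $M$ for the easy cases, augmentation inside $M|_{E_2}$ plus $(P2)(a)$ for the tight case, and symmetry for $\bases(M_2)$) is the natural proof and matches the intended one in the cited reference.
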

 
\begin{theorem}\label{theo-decomp1}\cite[Theorem 1]{CR} Let $M=(E,\bases)$ be a matroid of rank $r$ and let $E=E_1\cup E_2$ be a good $2$-partition  with  integers $0<a_i< r(M|_{E_i})$, $i=1,2$. Then, $P(M)=P(M_1)\cup P(M_2)$ is a hyperplane split, where $M_1$ and $M_2$ are the matroids given by Lemma \ref{lem-decomp1}. 
 \end{theorem}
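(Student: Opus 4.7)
The plan is to exhibit a single hyperplane whose intersection with $P(M)$ is the common face $P(M_1)\cap P(M_2)$ and whose two closed half-spaces cut $P(M)$ into $P(M_1)$ and $P(M_2)$. The natural candidate is
$$H = \left\{ x \in \R^n : \sum_{i \in E_1} x_i = a_1 \right\},$$
which by $(P1)$ reads equivalently $\sum_{i \in E_2} x_i = a_2$.

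First, I would classify the vertices of $P(M)$ relative to $H$. Any base $B$ satisfies $|B\cap E_1|+|B\cap E_2|=r=a_1+a_2$, so either $|B\cap E_1|\le a_1$, giving $B\in\bases(M_1)$, or $|B\cap E_1|\ge a_1+1$, in which case $|B\cap E_2|\le a_2-1<a_2$ and $B\in\bases(M_2)$. Thus every vertex of $P(M)$ lies in the closed half-space $H^-$ or $H^+$, and its membership in $\bases(M_1)$ or $\bases(M_2)$ aligns with this side.

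Next, I would upgrade this vertex-level statement to the polytope equality $P(M)\cap H^{-}=P(M_1)$. Here I invoke the edge description of the matroid base polytope: two vertices of $P(M)$ are joined by an edge exactly when the corresponding bases differ by a single basis exchange $B\mapsto (B-e)+f$, and consequently the value $\sum_{i\in E_1} x_i$ changes by $0$ or $\pm 1$ along any edge. Therefore no edge of $P(M)$ strictly crosses $H$, so every vertex of the polytope $P(M)\cap H^-$ is already a vertex of $P(M)$ lying in $H^-$, i.e.\ a base in $\bases(M_1)$. This gives $P(M)\cap H^-=P(M_1)$, and symmetrically $P(M)\cap H^+=P(M_2)$.

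Combining these yields $P(M)=P(M_1)\cup P(M_2)$ together with $P(M_1)\cap P(M_2)=P(M)\cap H$, which is a face of $P(M)$ cut out by the hyperplane $H$ and is therefore a face of each $P(M_i)$ as well; by \cite[Theorem 2]{GS} it is itself a matroid base polytope. To see that the split is nontrivial I would use $a_i<r(M|_{E_i})$: extend an independent subset of $E_1$ of size $a_1+1$ to a base $B$ of $M$ to produce a vertex strictly in $H^+$, and argue symmetrically for $E_2$. The only technical care needed is the edge-crossing step, but this is essentially immediate from the single-exchange characterization of edges in $P(M)$; the substantive content, namely verifying that $\bases(M_1)$ and $\bases(M_2)$ genuinely satisfy the basis exchange axiom (which is where property $(P2)(\mathrm{a})$ is used), is already absorbed into Lemma \ref{lem-decomp1}.
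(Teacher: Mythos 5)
Your proof is correct. The key structural observations are all in place: the hyperplane $H=\{\sum_{i\in E_1}x_i=a_1\}$ (equivalently $\sum_{i\in E_2}x_i=a_2$ via $(P1)$), the dichotomy showing $\bases(M)=\bases(M_1)\cup\bases(M_2)$, and the edge-crossing argument using the single-exchange characterization of edges of $P(M)$ to conclude that no edge strictly crosses $H$, hence $P(M)\cap H^-=P(M_1)$ and $P(M)\cap H^+=P(M_2)$. The nontriviality step via $a_i<r(M|_{E_i})$ is also the right one: an independent set of size $a_1+1$ in $E_1$ extends to a base lying strictly in $H^+$, and symmetrically for $E_2$, so neither piece is all of $P(M)$, while Lemma \ref{lem-decomp1} guarantees both are genuine matroid base polytopes. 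Note that the present paper does not reprove Theorem \ref{theo-decomp1}; it is quoted from \cite{CR}. Your argument is the standard (and, as far as I can tell, the same) one: pick the counting hyperplane, verify the vertex classification, and upgrade to a polytope identity via the fact that adjacent bases differ in a single exchange so the functional $\sum_{E_1}x_i$ moves by at most one along edges. You are right that the real combinatorial content, namely that $\bases(M_1)$ and $\bases(M_2)$ satisfy the exchange axiom, is exactly what property $(P2)(a)$ buys and is delegated to Lemma \ref{lem-decomp1}; your write-up correctly separates that from the purely polyhedral part of the argument.
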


We shall use these two results as the initial step in our construction of a sequence of $t\ge 2$ hyperplane splits. 

\begin{lemma}\label{lem-decomp} Let $t\ge 2$ be an integer and let $E=\bigcup\limits_{i=1}^t E_i$ be a good $t$-partition  with  integers $0<a_i< r(M|_{E_i})$, i=1,\dots ,t. Let 
$$\bases(M_1) =\{B\in \bases(M) : |B\cap E_1|\le a_1\}$$
and, for each $j=1,\dots , t$, let

$$
%\begin{array}{ll}
\bases(M_j) =\left\{B\in \bases(M)\right.  :   |B\cap E_1|\ge a_1,\dots ,
%& |B\cap (E_1\cup E_2)|\ge a_1+a_2,\\
%& \ \ \ \ \ \ \ \vdots \\
|B\cap\bigcup\limits_{i=1}^{j-1} E_i|\ge \sum\limits_{i=1}^{j-1}a_i,
 \left.|B\cap\bigcup\limits_{i=1}^{j} E_i|\le \sum\limits_{i=1}^{j}a_i \right\}.
%\end{array}
$$

\noindent Then $\bases(M_i)$ is the collection of bases of a matroid for each $i=1,\dots ,t$.
\end{lemma}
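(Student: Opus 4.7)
The plan is to proceed by induction on $j$, peeling off one hyperplane split at a time by repeatedly invoking Theorem \ref{theo-decomp1}. Introduce the shorthand $F_j = \bigcup_{i=1}^{j} E_i$ and $s_j = \sum_{i=1}^{j} a_i$, and for $0 \le j \le t-1$ define the auxiliary collections
\[
\bases(N_j) = \{B \in \bases(M) : |B \cap F_i| \ge s_i \text{ for all } 1 \le i \le j\},
\]
with the convention $N_0 = M$. With this setup one has the recursive descriptions $\bases(M_j) = \{B \in \bases(N_{j-1}) : |B \cap F_j| \le s_j\}$ and $\bases(N_j) = \{B \in \bases(N_{j-1}) : |B \cap F_j| \ge s_j\}$, so showing that each $\bases(M_j)$ is a matroid reduces to iteratively splitting $N_{j-1}$ along the hyperplane $\sum_{i \in F_j} x_i = s_j$.

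I would prove by induction on $j$ that $N_j$ is a matroid for $0 \le j \le t-1$, simultaneously yielding that $M_j$ is a matroid for $1 \le j \le t$. The base case $N_0 = M$ is trivial. For the inductive step, assuming $N_{j-1}$ is a matroid, I apply Theorem \ref{theo-decomp1} to $N_{j-1}$ with respect to the $2$-partition $E = F_j \cup (E \setminus F_j)$ and integers $s_j$ and $r - s_j$. Provided this is a good $2$-partition of $N_{j-1}$ in the sense of \cite{CR}, Theorem \ref{theo-decomp1} directly gives a hyperplane split $P(N_{j-1}) = P(M_j) \cup P(N_j)$, so both $\bases(M_j)$ and $\bases(N_j)$ are collections of bases of matroids, completing the inductive step.

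The crux of the argument, and the main obstacle, is verifying that $E = F_j \cup (E \setminus F_j)$ is good for $N_{j-1}$. The key axiom reads: whenever $X \in \indep(N_{j-1}|_{F_j})$ with $|X| \le s_j$ and $Y \in \indep(N_{j-1}|_{E \setminus F_j})$ with $|Y| \le r - s_j$, then $X \cup Y \in \indep(N_{j-1})$. Since every basis of $N_{j-1}$ is a basis of $M$, one has $\indep(N_{j-1}) \subseteq \indep(M)$, so $X$ and $Y$ are independent in the corresponding restrictions of $M$, and property $(P2)(a)$ applied at the splitting index $j$ yields $X \cup Y \in \indep(M)$. The delicate point is upgrading this to $\indep(N_{j-1})$, i.e., extending $X \cup Y$ to a basis $B$ of $M$ with $|B \cap F_i| \ge s_i$ for every $1 \le i < j$. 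This is exactly where $(P2)(b)$ enters: one decomposes $X$ and $Y$ according to the slices $E_1, \ldots, E_{j-1}, E_j, E_{j+1}, \ldots, E_t$ and adjoins independent pieces of the right sizes in each slice so that each partial sum $|B \cap F_i|$ meets the required lower bound; property $(P2)(b)$, applied with splitting indices matching the boundaries of $F_i$ and $F_j$, then certifies that the resulting concatenation is independent in $M$. The rank conditions $s_j < r(N_{j-1}|_{F_j})$ and $r - s_j < r(N_{j-1}|_{E \setminus F_j})$ needed to apply Theorem \ref{theo-decomp1} will follow from the hypothesis $a_i < r(M|_{E_i})$ by a similar construction, producing an independent set of the appropriate size via $(P2)(b)$. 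The careful bookkeeping in this extension argument is where I expect the main effort to lie; once it is done, the induction closes.
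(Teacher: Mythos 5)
Your strategy is essentially the paper's own: peel off one hyperplane split at a time by iteratively applying Theorem~\ref{theo-decomp1} to the ``upper'' piece (your $N_j$ is the paper's $\overline{M}_j$), and verify that at each stage the induced $2$-partition $F_j\cup(E\setminus F_j)$ of $N_{j-1}$ is good in the sense of \cite{CR}. So the route matches.

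The genuine gap is that you flag the crucial verification --- that $X\cup Y\in\indep(N_{j-1})$, not merely $\indep(M)$ --- as ``where the main effort lies'' and leave it as a sketch rather than carrying it out. This is exactly the step on which the whole induction hinges: $(P2)(a)$ gives $X\cup Y\in\indep(M)$, but you must still produce an extension of $X\cup Y$ to a basis $B$ of $M$ satisfying $|B\cap F_i|\ge s_i$ for $i<j$, and the ``adjoin independent pieces of the right sizes in each slice'' plan needs to be made precise (in particular one has to handle the case $|X\cap E_1|<a_1$, where naively topping up $X\cap E_1$ to size $a_1$ can push $|X\cap E_2|$ past $a_2$, so $(P2)(b)$ does not apply in the obvious way). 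To be fair, the paper itself is terse here, asserting the implication from $(P1)$ and $(P2)(a)$ alone and then ``inductively applying the above argument.'' Indeed, your diagnosis that $(P2)(a)$ cannot by itself upgrade independence in $M$ to independence in $\overline{M}_j$, and that some use of $(P2)(b)$ (or an explicit exchange argument) is needed, is a sharper reading of the difficulty than the paper offers; but as written your proof stops short of closing it.
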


\begin{proof} By Properties $(P1)$ en $(P2)$ we have that

$\hbox{if $X\in \indep (M|_{E_1})$ with $|X|\le a_1$
and  $Y\in \indep (M|_{E_{2}\cup \cdots \cup E_t})$ with $|Y|\le \sum\limits_{i=2}^t a_i$,}$

then  $X\cup Y\in \indep(M)$.
So, by Lemma \ref{lem-decomp1}, $\bases(M_1)$ is the collection of bases of a matroid. Now, notice that
$\bases(\overline{M_1})=\{B\in \bases(M) : |B\cap E_1|\ge a_1\}$
is also the collection of bases of a matroid on $E$. We claim that
$P(\overline{M_1}) =P(M_2)\cup P(\overline{M_2})$
is a hyperplane split where 
$$\bases(M_2)=\{B\in \bases(M) : |B\cap E_1|\ge a_1 \text{ and } |B\cap (E_1\cup E_2)|\le a_1+a_2\}$$
and
$$\bases(\overline{M_2})=\{B\in \bases(M) : |B\cap E_1|\ge a_1 \text{ and } |B\cap (E_1\cup E_2)|\ge a_1+a_2\}.$$

Indeed, since $\bases(\overline{M_1})$
is the collection of bases of a matroid on $E$, then, by properties $(P1)$ and $(P2)$ $(a)$,

$\hbox{if $X\in \indep (\overline{M}|_{E_1\cup E_2})$ with $|X|\le a_1+a_2$ and $Y\in \indep (\overline{M}|_{E_{3}\cup \cdots \cup E_t})$ with $|Y|\le \sum\limits_{i=3}^t a_i$,}$

then  $X\cup Y\in \indep(\overline{M})$.
So, by Lemma \ref{lem-decomp1}, $\bases(M_2)$ is the collection of bases of a matroid (and thus $\bases(\overline{M_2})$ also is). Inductively applying the above argument to $\overline{M}_j$,  it can be easily checked that for all $j$ $\bases(M_j)$ is the collection of bases of a matroid.
\end{proof}

\begin{theorem}\label{theo-decomp} Let $t\ge 2$ be an integer and let $M=(E,\bases)$ be a matroid of rank $r$. Let $E=\bigcup\limits_{i=1}^t E_i$ be a good $t$-partition  with  integers $0<a_i< r(M|_{E_i})$, $i=1,\dots, t$.  Then $P(M)$ has a sequence of $t$ hyperplane splits yielding the decomposition 
$$P(M)=\bigcup\limits_{i=1}^tP(M_i),$$ where $M_i$, $1\le i\le t$, are the matroids defined in Lemma \ref{lem-decomp}.
\end{theorem}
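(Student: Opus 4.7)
The plan is to produce the decomposition by applying Theorem \ref{theo-decomp1} iteratively, peeling off one piece $P(M_j)$ at each step along the lines already implicit in the proof of Lemma \ref{lem-decomp}. Set $\overline{M_0}:=M$ and, for $j\ge 1$, let $\overline{M_j}$ denote the matroid with
\[
\bases(\overline{M_j})=\Bigl\{B\in\bases(M):\ \Bigl|B\cap\bigcup_{i=1}^k E_i\Bigr|\ge\sum_{i=1}^k a_i\ \text{ for }\ k=1,\dots,j\Bigr\}.
\]
I would prove by induction on $j=1,\dots,t-1$ that a single hyperplane split carries $P(\overline{M_{j-1}})$ to $P(M_j)\cup P(\overline{M_j})$.

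The base case $j=1$ is immediate: by properties $(P1)$ and $(P2)(a)$ (taken at $j=1$), the bipartition $E=E_1\cup(E_2\cup\cdots\cup E_t)$ with the integers $a_1$ and $\sum_{i\ge 2}a_i$ is a good $2$-partition of $M$ in the sense of \cite{CR}, and Theorem \ref{theo-decomp1} delivers the split $P(M)=P(M_1)\cup P(\overline{M_1})$. For $j\ge 2$, the essential claim is that the bipartition $E=(E_1\cup\cdots\cup E_j)\cup(E_{j+1}\cup\cdots\cup E_t)$, with integers $\sum_{i\le j}a_i$ and $\sum_{i>j}a_i$, is a good $2$-partition of the matroid $\overline{M_{j-1}}$; once this is in place, Theorem \ref{theo-decomp1} again furnishes the split $P(\overline{M_{j-1}})=P(M_j)\cup P(\overline{M_j})$.

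I expect the verification of this good $2$-partition at intermediate steps to be the main obstacle, since it requires translating from $\indep(M)$ to $\indep(\overline{M_{j-1}})$. Given an admissible pair $X\in\indep(\overline{M_{j-1}}|_{E_1\cup\cdots\cup E_j})$ and $Y\in\indep(\overline{M_{j-1}}|_{E_{j+1}\cup\cdots\cup E_t})$, I would decompose $X=X'\cup X''$ with $X'\subseteq E_1\cup\cdots\cup E_{j-1}$ and $X''\subseteq E_j$ and invoke property $(P2)(b)$ on the triple $(X',X'',Y)$ to obtain $X\cup Y\in\indep(M)$. To upgrade this to membership in $\indep(\overline{M_{j-1}})$, one extends $X\cup Y$ to a basis of $M$ respecting the inequalities that define $\bases(\overline{M_{j-1}})$; this augmentation is again produced by $(P2)(b)$, together with the fact that $X$, being independent in $\overline{M_{j-1}}|_{E_1\cup\cdots\cup E_j}$, extends to a basis of $\overline{M_{j-1}}$ whose portion in $E_1\cup\cdots\cup E_k$ has size at least $\sum_{i\le k}a_i$ for every $k\le j-1$.

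After $t-1$ iterations the full decomposition $P(M)=P(M_1)\cup\cdots\cup P(M_{t-1})\cup P(\overline{M_{t-1}})$ has been assembled. Since every basis $B$ satisfies $|B|=r=\sum_{i=1}^t a_i$, the inequalities defining $\overline{M_{t-1}}$ automatically imply $|B\cap\bigcup_{i=1}^t E_i|\le\sum_{i=1}^t a_i$ with equality, so $\overline{M_{t-1}}$ coincides with the matroid $M_t$ introduced in Lemma \ref{lem-decomp}. Concatenating the splits yields the announced decomposition $P(M)=\bigcup_{i=1}^t P(M_i)$.
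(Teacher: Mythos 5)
The central difficulty with your argument is that it establishes a sequence of splits $P(\overline{M_{j-1}})=P(M_j)\cup P(\overline{M_j})$ but never verifies that the final union $P(M)=\bigcup_{i=1}^t P(M_i)$ is actually a \emph{decomposition} in the sense defined in the paper, i.e.\ that $P(M_j)\cap P(M_k)$ is a common face of both pieces for \emph{every} pair $1\le j<k\le t$, not only for consecutive pairs. Each individual hyperplane split guarantees only that $P(M_j)\cap P(\overline{M_j})$ is a common face; it says nothing about $P(M_j)\cap P(M_k)$ when $k>j+1$, since $P(M_k)$ is a proper sub-piece of $P(\overline{M_j})$ carved out by later inequalities, and the intersection of two $0/1$-polytopes need not even be a lattice polytope, let alone a matroid base polytope. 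The paper's introduction flags this explicitly: after two splits $P(M)=P(M_1)\cup P(M_2)\cup P(M_2')$ the intersections $P(M_1)\cap P(M_2)$ and $P(M_1)\cap P(M_2')$ ``might not be matroid base polytopes.''

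Consequently the bulk of the paper's proof, which you omit entirely, is a three-case verification of the basis-exchange axiom for $\bases(M_j)\cap\bases(M_k)$, for arbitrary $j<k$, using properties $(P1)$ and $(P2)(b)$ of the good $t$-partition. (The paper reduces the claim to ``$\bases(M_j)\cap\bases(M_k)$ is the collection of bases of a matroid'' and then checks exchange by locating the minimal index $l$ where the relevant cardinality condition on $X-e$ fails, finding $f$ inside $\bigcup_{i\le l}E_i$, $\bigcup_{j<i\le l}E_i$, or $\bigcup_{i>k}E_i$, and invoking $(P2)(b)$ to certify $X-e+f\in\bases(M)$.) Your description of the iterative peeling and the identification $\overline{M_{t-1}}=M_t$ match Lemma~\ref{lem-decomp} and the first two sentences of the paper's proof, but without the pairwise-intersection analysis the claim ``the full decomposition $P(M)=\bigcup_{i=1}^t P(M_i)$ has been assembled'' is unsupported. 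A secondary, smaller concern is that your justification that $(E_1\cup\cdots\cup E_j)\cup(E_{j+1}\cup\cdots\cup E_t)$ is a good $2$-partition of $\overline{M_{j-1}}$ (not of $M$) is left at the level of a sketch: $(P2)$ is stated for $\indep(M)$, and promoting membership from $\indep(M)$ to $\indep(\overline{M_{j-1}})$ needs an explicit augmentation argument, not just an appeal to $(P2)(b)$ ``together with the fact'' that $X$ extends suitably.
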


\begin{proof} By Theorem \ref{theo-decomp1}, the result holds for $t=2$. Moreover,  by the inductive construction of Lemma \ref{lem-decomp}, we clearly have that $P(M)=\bigcup\limits_{i=1}^tP(M_i)$ with $\bases(M)=\bigcup\limits_{i=1}^t\bases(M_i)$. We only need to
show that $\bases(M_j)\cap\bases(M_k)$ is the collection of bases of a matroid for any $1\le j<k\le t$. For, by definition of 
$\bases(M_i)$, we have

$$\hbox{$\bases(M_j) \cap \bases(M_k) =\{ B\in \bases(M) :$ the condition $C_h(B)$ is satisfied for all $1\le h\le k\}$}$$ where for $A\subseteq E$ :

$\bullet$ $C_h(A)$  is satisfied if $|A\cap\bigcup\limits_{i=1}^{h} E_i|\ge \sum\limits_{i=1}^{h}a_i$ and $1\le h\le k$, $h\neq j,k$,

$\bullet$ $C_j(A)$  is satisfied if $|A\cap\bigcup\limits_{i=1}^{j} E_i|= \sum\limits_{i=1}^{j}a_i$,

and

$\bullet$ $C_k(A)$  is satisfied if $|A\cap\bigcup\limits_{i=1}^{k} E_i|\le  \sum\limits_{i=1}^{k}a_i$.

We will check the exchange axiom for any $X,Y\in \bases(M_j) \cap \bases(M_k)$. Since $X,Y\in \bases(M)$ for any $e\in X\setminus Y$ there exists $f\in Y\setminus X$ such that $X-e+f\in \bases(M)$. We will verify that  $X-e+f\in \bases(M_j) \cap \bases(M_k)$. We distingush three cases (depending which of the conditions $C_i(X-e)$ is satisfied).
\smallskip

{\bf Case 1.} There exists $1\le l\le j$ such that $C_l(X-e)$ is not satisfied. We suppose that $l$ is minimal with this property.
Since, by definition of $\bases(M_j) \cap \bases(M_k),  l\le j\le k$, $C_l(X)$ is satisfied, and $C_l(X-e)$ is not satisfied, we obtain
\smallskip

(a) $\left|X\cap \bigcup\limits_{i=1}^l E_i\right|=\sum\limits_{i=1}^la_i$, 

(b) $e\in \bigcup\limits_{i=1}^l E_i$,

(c) $|\underbrace{(X-e)\cap \bigcup\limits_{i=1}^l E_i}_{I_1}|=\sum\limits_{i=1}^la_i-1.$ 
\smallskip

Since $Y\in \bases(M_j) \cap \bases(M_k)$, then $|\underbrace{Y \cap \bigcup\limits_{i=1}^l E_i}_{I_2}|\ge \sum\limits_{i=1}^la_i$.

Therefore, by using (c), $I_1,I_2\in\indep(M|_{E_1\cup\cdots \cup E_l})\subseteq\indep(M)$ with $|I_1|<|I_2|$. So, there 
exists $f\in I_2\setminus I_1\subset Y\setminus X$ with $I_1\cup f\in\indep(M|_{E_1\cup\cdots \cup E_l})$. Thus,  $f\in \bigcup\limits_{i=1}^l E_i$ and

\begin{equation}\label{p11}
|I_1\cup f\cap  \bigcup\limits_{i=1}^l E_i|=\sum\limits_{i=1}^la_i-1.
\end{equation}

Moreover, since $X$ is a base, $|X|=r=\sum\limits_{i=1}^ta_i$ and, by (a), we have
$$|\underbrace{(X-e+f)\cap \bigcup\limits_{i=l+1}^t E_i}_{I_3}|\stackrel{(b)}{=}|X\cap \bigcup\limits_{i=l+1}^t E_i|=
\sum\limits_{i=1}^ta_i-\sum\limits_{i=1}^la_i=\sum\limits_{i=l+1}^t a_i.$$

We also have $I_3\in \indep(M|_{E_{l+1}\cup\cdots \cup E_t})$, thus, by $(P2)$ $(b)$, 

$\hbox{$I_1\cup f\cup I_3\in\indep(M)$ with $|I_1\cup f\cup I_3|=\sum\limits_{i=1}^la_i-1+1+
\sum\limits_{i=l+1}^ta_i=r$}$

\noindent and so $I_1\cup f\cup I_3=X-e+f\in\bases(M)$.

Finally we need to show that $X-e+f\in\bases_j\cap \bases_k$, that is $C_h(X-e+f)$ holds for each 
$1\le h\le k$.

 $(i) \ h< l$: Since $l$ is the minimum for which $C_l(X-e)$ is not verified, $C_h(X-e)$ is satisfied for each $1\le h<l$ and thus $C_h(X-e+f)$ is also satisfied (we just added a new element).
\smallskip

$(ii) \ h=l$: By equation \eqref{p11}, $C_l(X-e+f)$ is satisfied.
\smallskip

$(iii) \ h> l$: Since $e,f\in \bigcup\limits_{i=1}^l E_i$,

$\hspace{4cm} |X-e+f\cap \bigcup\limits_{i=1}^h E_i|=|X\cap \bigcup\limits_{i=1}^h E_i|,$

thus $C_h(X-e+f)$ is satisfied if and only if $C_h(X)$ is satisfied, which is the case since $h>l$.
\smallskip

{\bf Case 2.} $C_{l'}(X-e)$ is satisfied for all $1\le l'\le j$ and there exists $j+1\le l\le k-1$ such that 
$C_l(X-e)$ is not satisfied. We suppose that $l$ is minimal with this property.
Since $C_l(X)$ is satisfied and $C_l(X-e)$ is not,
\smallskip

(a) $\left|X\cap \bigcup\limits_{i=1}^l E_i\right|=\sum\limits_{i=1}^la_i$, 

(b) $e\in \bigcup\limits_{i=j+1}^l E_i$ (since $C_j(X-e)$ is satisfied), 

(c) $|\underbrace{(X-e)\cap \bigcup\limits_{i=1}^l E_i}_{I_1}|=\sum\limits_{i=1}^la_i-1.$ 
\smallskip

Since $C_j(X-e)$ is satisfied,
\begin{eqnarray}\label{p2}
|\underbrace{(X-e)\cap \bigcup\limits_{i=j+1}^l E_i}_{I_1}| & =|(X-e)\cap \bigcup\limits_{i=1}^l E_i|-
|(X-e)\cap \bigcup\limits_{i=1}^j E_i| \notag\\
 & \stackrel{(c)}{=}\sum\limits_{i=1}^la_i-1-\sum\limits_{i=1}^ja_i=\sum\limits_{i=j+1}^l a_i-1.
\end{eqnarray}

Let  $Y\in \bases(M_j) \cap \bases(M_k)$. Since $C_j(Y)$ and $C_l(Y)$  are satisfied, 

$$\begin{array}{ll}
|\underbrace{Y \cap \bigcup\limits_{i=j+1}^l E_i}_{I_2}|
& =|Y \cap \bigcup\limits_{i=1}^l E_i|- |Y \cap \bigcup\limits_{i=1}^j E_i|\\
& \ge \sum\limits_{i=1}^la_i-\sum\limits_{i=1}^ja_i=\sum\limits_{i=j+1}^la_i.
\end{array}$$

Since $|I_1|<|I_2|$, there exists $f\in I_2\setminus I_1$ such that $I_1+f \in\indep(M|_{E_{j+1}\cup\cdots \cup E_l})$. So,
$f\in \bigcup\limits_{i=j+1}^l E_i$ and, by (b), we have

$$\left(X-e+f \right)\cap \bigcup\limits_{i=1}^j E_i=X \cap \bigcup\limits_{i=1}^j E_i.$$

Since $X$ is a base,  $X-e+f \cap \bigcup\limits_{i=1}^j E_i\in\indep(M|_{E_{1}\cup\cdots \cup E_j})$ (also notice that
$(X-e+f) \cap \bigcup\limits_{i=l+1}^t E_i\in\indep(M|_{E_{l+1}\cup\cdots \cup E_t})$). Moreover, since $X\in \bases_j\cap \bases_k$, $C_j(X)$ is satisfied and thus

\begin{equation}\label{p3}
|(X-e+f) \cap \bigcup\limits_{i=1}^j E_i|=\sum\limits_{i=1}^ja_i
\end{equation}
 
 and, by equation \eqref{p2}, we have

\begin{equation}\label{p4}
|(X-e+f) \cap \bigcup\limits_{i=j+1}^l E_i|=\sum\limits_{i=j+1}^la_i
\end{equation}

obtaining that

$$|(X-e+f) \cap \bigcup\limits_{i=l+1}^t E_i|=r-\sum\limits_{i=1}^ja_i-\sum\limits_{i=j+1}^la_i=\sum\limits_{i=l+1}^ta_i.$$

Now, by $(P2)$ $(b)$, we have
$$\left((X-e+f) \cap \bigcup\limits_{i=1}^j E_i\right)\cup\left((X-e+f) \cap \bigcup\limits_{i=j+1}^l E_i\right)\cup \left((X-e+f) \cap \bigcup\limits_{i=l+1}^t E_i\right)=X-e+f \in\indep(M).$$

Since $|X-e+f|=r$, $X-e+f\in\bases(M)$. 
\smallskip

Finally we need to show that $X-e+f\in\bases_j\cap \bases_k$, that is, that  $C_h(X-e+f)$ is verified for each 
$1\le h\le k$.
\smallskip
 
 $(i)\ h<l$ and $h\neq j$:  Since $C_h(X-e)$ is satisfied, by the minimality of $l$, $C_h(X-e+f)$ is also satisfied.
 \smallskip
 
 $(ii)\ h=j$: By equation \eqref{p3}, $C_j(X-e+f)$ is satisfied.
\smallskip
 
 $(iii)\ h=l$: By equations \eqref{p3} and \eqref{p4}, $C_l(X-e+f)$ is satisfied.
\smallskip
 
 $(iv)\ h>l$: Since $e,f\in \bigcup\limits_{i=j+1}^l E_i$,
$|X-e+f\cap \bigcup\limits_{i=1}^h E_i|=|X\cap \bigcup\limits_{i=1}^h E_i|,$
thus $C_h(X-e+f)$ is satisfied if and only if $C_h(X)$ is satisfied, which is the case because $h>l$.
 
\smallskip
{\bf Case 3.} $C_i(X-e)$ is satisfied for every $1\le i\le k$.

{\bf Subcase (a)} $|(X-e) \cap \bigcup\limits_{i=1}^k E_i|=\sum\limits_{i=1}^ka_i$. We first notice that $e\in\bigcup\limits_{i=k+1}^t E_i$ (otherwise $|X-e \cap \bigcup\limits_{i=1}^k E_i|<|X \cap \bigcup\limits_{i=1}^k E_i|$ which is impossible since
$C_k(X)$ holds). Now, 

\begin{equation}\label{tt}
|\underbrace{(X-e) \cap \bigcup\limits_{i=k+1}^t E_i}_{I_1}|=r-1-\sum\limits_{i=1}^ka_i=\sum\limits_{i=k+1}^ta_i-1.
\end{equation}

Let  $Y\in \bases(M_j) \cap \bases(M_k)$. Since $C_j(Y)$ and $C_l(Y)$  are satisfied, 
$|Y \cap \bigcup\limits_{i=1}^k E_i|\le \sum\limits_{i=1}^ka_i$, and so
$|\underbrace{Y \cap \bigcup\limits_{i=k+1}^t E_i}_{I_2}|\ge \sum\limits_{i=k+1}^ta_i.$

Since $|I_1|<|I_2|$, there exists $f\in I_2\setminus I_1$ such that $I_1+f \in\indep(M|_{E_{k+1}\cup\cdots \cup E_t})$. So,
$f\in \bigcup\limits_{i=k+1}^t E_i$ and since $e\in\bigcup\limits_{i=k+1}^t E_i$,

$$(X-e+f) \cap \bigcup\limits_{i=1}^k E_i=X \cap \bigcup\limits_{i=1}^k E_i\in\indep(M|_{E_{1}\cup\cdots \cup E_k}).$$

Also, since $(X-e+f) \cap \bigcup\limits_{i=k+1}^t E_i\in\indep(M|_{E_{k+1}\cup\cdots \cup E_t})$, by $(P2) (b)$ we have

$$X-e+f =\left(X-e+f \cap \bigcup\limits_{i=1}^k E_i\right)\cup\left(X-e+f \cap \bigcup\limits_{i=k+1}^t E_i\right)\in \indep(M).$$

Moreover, by using equation \eqref{tt} and the fact that $f\in \bigcup\limits_{i=k+1}^t E_i$ we obtain that

$\hspace{4cm} |(X-e+f) \cap \bigcup\limits_{i=k+1}^t E_i|=\sum\limits_{i=k+1}^ta_i.$ 

Since $|(X-e) \cap \bigcup\limits_{i=1}^k E_i|=\sum\limits_{i=1}^ka_i$,
$$|(X-e+f) \cap \bigcup\limits_{i=1}^k E_i|=\sum\limits_{i=1}^ka_i.$$ 
Therefore, 
$$|(X-e+f) \cap \bigcup\limits_{i=1}^t E_i|=|(X-e+f) \cap \bigcup\limits_{i=1}^k E_i|+|(X-e+f) \cap \bigcup\limits_{i=k+1}^t E_i|=\sum\limits_{i=1}^ta_i=r$$ 
and so $X-e+f\in\bases(M)$.
\smallskip

Finally we need to show that $X-e+f\in\bases_j\cap \bases_k$, that is, that  $C_h(X-e+f)$ is verified for each 
$1\le h\le k$. Since $e,f\in \bigcup\limits_{i=k+1}^t E_i$, $C_h(X-e+f)$ becomes $C_h(X)$ for all 
$1\le h\le k$, which is satisfied.

{\bf Subcase (b)} If $|(X-e) \cap \bigcup\limits_{i=1}^k E_i|<\sum\limits_{i=1}^ka_i$, then $e\in\bigcup\limits_{i=j+1}^t E_i$ (otherwise $|(X-e) \cap \bigcup\limits_{i=1}^j E_i|<|X \cap \bigcup\limits_{i=1}^jE_i|$ which is impossible since
$C_j(X)$ holds). Now, since $C_j(X-e)$ is satisfied,

$\hspace{4cm} |(X-e) \cap \bigcup\limits_{i=1}^j E_i|= \sum\limits_{i=1}^ja_i,$

and thus

$\hspace{4cm} |\underbrace{(X-e) \cap \bigcup\limits_{i=j+1}^t E_i}_{I_1}|= \sum\limits_{i=j+1}^ta_i-1.$

Let  $Y\in \bases(M_j) \cap \bases(M_k)$. Since $C_j(Y)$ and $C_l(Y)$  are satisfied, 

$\hspace{4cm} |Y \cap \bigcup\limits_{i=1}^j E_i|= \sum\limits_{i=1}^ja_i,$

and thus

$\hspace{4cm} |\underbrace{Y \cap \bigcup\limits_{i=j+1}^t E_i}_{I_2}|= \sum\limits_{i=j+1}^ta_i.$

Since $|I_1|<|I_2|$, there exists $f\in I_2\setminus I_1$ such that $I_1+f \in\indep(M|_{E_{j+1}\cup\cdots \cup E_t})$. So,
$f\in \bigcup\limits_{i=j+1}^t E_i$. Since  $e\in\bigcup\limits_{i=j+1}^t E_i$,

\begin{equation}\label{p6}
(X-e+f) \cap \bigcup\limits_{i=1}^j E_i=X\cap \bigcup\limits_{i=1}^j E_i\in\indep(M|_{E_{1}\cup\cdots \cup E_j})
\end{equation}

and, by $(P2)$ $(b)$, we have
$$\left(X-e+f \cap \bigcup\limits_{i=1}^j E_i\right)\cup\left( X-e+f \cap \bigcup\limits_{i=j+1}^t E_i\right)\in\indep(M)$$
Therefore, $X-e+f\in\bases(M)$.

Finally, we need to show that $X-e+f\in\bases_j\cap \bases_k$, that is, $C_h(X-e+f)$ is verified for each 
$1\le h\le k$.
\smallskip
 
$(i)\ h<j$: Since $C_h(X-e)$ is satisfied, $C_h(X-e+f)$ is also satisfied.
\smallskip
 
$(ii)\ h=j$: $C_j(X-e+f)$ is satisfied by equation \eqref{p6}.
\smallskip
 
$(iii)\ j+1\le h\le k-1$: Since $C_h(X-e)$ is satisfied then $C_h(X-e+f)$ is also satisfied.

$(iv)\ h=k$: Since $|X-e \cap \bigcup\limits_{i=1}^k E_i|<\sum\limits_{i=1}^ka_i$ then
$|X-e+f \cap \bigcup\limits_{i=1}^k E_i|\le \sum\limits_{i=1}^ka_i$ and thus $C_h(X-e+f)$ is satisfied.
\end{proof}

\subsection{Uniform matroids}

\begin{corollary}\label{cor1a} Let $n, r,t\ge 2$ be integers with $n\ge r+t$ and $r\ge t$. 
Let $p_t(n)$ be the number of different decompositions of the integer $n$ of the form $n=\sum\limits_{i=1}^{t}p_i$ with $p_i\ge 2$ and let  $h_t(U_{n,r})$ be the number of decompositions of $P(U_{n,r})$ into $t$ pieces. Then,
$$h_t(U_{n,r})\ge p_t(n).$$ 
\end{corollary}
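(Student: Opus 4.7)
The plan is to associate, to each integer partition $n=p_1+\cdots+p_t$ with every $p_i\ge 2$, a concrete decomposition of $P(U_{n,r})$ into $t$ pieces via Theorem~\ref{theo-decomp}, and then verify that decompositions arising from distinct partitions are pairwise inequivalent. Given such a partition, I would fix any set-partition $[n]=E_1\cup\cdots\cup E_t$ with $|E_i|=p_i$, and note that uniformity of $U_{n,r}$ yields $r(U_{n,r}|_{E_i})=\min(p_i,r)\ge 2$, so the rank requirement of Theorem~\ref{theo-decomp} is met.

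Next I would exhibit integers $1\le a_i\le\min(p_i,r)-1$ with $\sum a_i=r$. Feasibility is clear on the low side since $t\le r$ by hypothesis, and a short case analysis on $|\{i:p_i>r\}|$ (empty, singleton, or at least two), using $n\ge r+t$ and $r\ge t\ge 2$, shows $\sum_i(\min(p_i,r)-1)\ge r$, so a valid choice of $a_i$'s exists. Because every subset of $[n]$ of size at most $r$ is independent in $U_{n,r}$, and the unions $X\cup Y$ and $X\cup Y\cup Z$ arising in properties $(P2)(a)$ and $(P2)(b)$ all have cardinality at most $\sum a_i=r$, those two properties hold automatically, while $(P1)$ is the very choice of $a_i$. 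Hence $[n]=\bigcup E_i$ is a good $t$-partition, and Theorem~\ref{theo-decomp} delivers a decomposition $P(U_{n,r})=\bigcup_{j=1}^t P(M_j)$.

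The main obstacle is to show that distinct partitions produce inequivalent decompositions. My plan is to use, as an invariant of the equivalence class, the multiset of supports of the $t-1$ splitting hyperplanes in the sequence: the $j$-th split occurs along the hyperplane $\sum_{i\in S_j}x_i=\sum_{i\le j}a_i$ with $S_j=E_1\cup\cdots\cup E_j$, so the supports form a chain $S_1\subsetneq\cdots\subsetneq S_{t-1}$ whose successive cardinality gaps reproduce $\{p_1,\ldots,p_t\}$. Those supports can be read off the decomposition as the supports of the facet-defining equations of the common facets between consecutive pieces in the tree of splits; and combined with the observation that each piece $P(M_j)$ carries a combinatorial invariant ---e.g.\ the number of vertices $|\bases(M_j)|$, which is preserved under combinatorial equivalence and depends on $(p_i,a_i)$--- the unordered partition $\{p_1,\ldots,p_t\}$ is recoverable from the decomposition up to equivalence. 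The delicate part is ruling out accidental coincidences of these invariants across distinct partitions; once that is handled, distinct partitions yield pairwise inequivalent decompositions, giving the bound $h_t(U_{n,r})\ge p_t(n)$.
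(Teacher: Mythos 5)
Your construction is essentially identical to the paper's: you partition $E$ into blocks $E_i$ of size $p_i$, observe $U_{n,r}|_{E_i}\cong U_{p_i,\min(p_i,r)}$, run the same three-way case analysis on $|\{i:p_i>r\}|$ to get $\sum_i\bigl(\min(p_i,r)-1\bigr)\ge r$, choose the $a_i$ accordingly, and use the fact that every $\le r$-subset is independent in $U_{n,r}$ to verify $(P1)$ and $(P2)$ for free. That part is right and tracks the paper step for step.

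Where you diverge is that you try to justify that distinct integer partitions yield inequivalent decompositions, and you candidly flag that this piece is not finished (``the delicate part is ruling out accidental coincidences of these invariants''). This is an honest gap in your write-up. It is worth noting, however, that the paper itself does not address this point either: its proof stops after exhibiting the good $t$-partition for each partition of $n$ and implicitly treats it as clear that the resulting decompositions are pairwise non-equivalent. Your proposed invariant (the chain of hyperplane supports, giving back the multiset $\{p_1,\dots,p_t\}$) is the natural thing to look at, and the vertex-count of the pieces $|\bases(M_j)|$ is a reasonable auxiliary invariant, but as you say, one would still need to rule out collisions caused by the freedom in choosing the $a_i$ and by relabelling. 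Since the corollary only claims a lower bound and the paper does not supply this argument, your proposal should be viewed as matching the paper's route with an extra, but incomplete, attempt to close a gap the paper leaves implicit. To actually complete it, a cleaner tack might be to fix a canonical choice of $a_i$ (e.g.\ make them as equal as possible, or lexicographically extremal) for each partition, and then show directly that the resulting $M_1$ (whose bases are cut out by a single inequality $|B\cap E_1|\le a_1$) determines $(p_1,a_1)$ up to the symmetry of the uniform matroid, and iterate; that avoids reasoning about the whole decomposition at once.
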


\begin{proof} We consider the partition $E=\{1,\dots, n\}=\bigcup\limits_{i=1}^t E_i$, where 
$$\begin{array}{ll}
E_1 & =\{1,\dots ,p_1\},\\
E_2 & =\{p_1+1,\dots ,p_1+p_2\},\\
& \vdots \\
E_t & =\{\sum\limits_{i=1}^{t-1}p_i+1,\dots ,\sum\limits_{i=1}^{t}p_i\}.\\
\end{array}$$
 
We claim that $\bigcup\limits_{i=1}^t E_i$ is a good $t$-partition. For, we
first notice that $M|_{E_i}$ is isomorphic to $U_{p_i,\min\{p_i,r\}}$ for each $i=1,\dots ,t$. 
Let  $r_i=r(M|_{E_i})=\min\{p_i,r\}$. We now show that 

\begin{equation}\label{rr}
\sum\limits_{i=1}^tr_i\ge r+t.
\end{equation}

For, we note that

$$\sum\limits_{i=1}^tr_i=\sum\limits_{i=1}^t r(M|_{E_i})=\sum\limits_{i\in T\subseteq \{1,\dots ,t\}}p_i+(t-|T|)r.$$

We distinguish three cases.

1) If $t=|T|$, then $\sum\limits_{i=1}^tr_i=\sum\limits_{i=1}^tp_i=n\ge r+t$.

2) If $t=|T|+1$, then $\sum\limits_{i=1}^tr_i=\sum\limits_{i=1}^{t-1}p_i+r\ge 2(t-1)+r\ge t+t-2+r\ge t+r$.

3) If $t=|T|+k$, with $k\ge 2$, then $\sum\limits_{i=1}^tr_i\ge kr\ge 2r\ge r+t$.

So, by equation \eqref{rr}, we can find integers $a_i'\ge 1$ such that $\sum\limits_{i=1}^t r_i=r+\sum\limits_{i=1}^t a'_i$.
Therefore, there exist integers $a_i=r(M|_{E_i})-a_i'$ with $0<a_i<r(M|_{E_i})$ such that $r=\sum\limits_{i=1}^t a_i$. Moreover, 
if $X\in \indep (M|_{E_1\cup \cdots \cup E_j})$ with $|X|\le \sum\limits_{i=1}^j a_i$,
$Y\in \indep (M|_{E_{j+1}\cup \cdots \cup E_k})$  with  $|Y|\le \sum\limits_{i=j+1}^k a_i$,
and $Z\in \indep (M|_{E_{k+1}\cup \cdots \cup E_t}$ with  $|Z|\le \sum\limits_{i=k+1}^t a_i$ for $1\le j<k\le t-1$, then
$|X\cup Y\cup Z|\le \sum\limits_{i=1}^t a_i=r$ and so 
$X\cup Y\cup Z$ is always a subset of one of the bases of $U_{n,r}$. Thus, $X\cup Y\cup Z\in\indep(U_{n,r})$  and $(P2)$ is also verified.
\end{proof}

Notice that there might be several choices for the values of $a_i$
(each providing a good $t$-partition). However, it is not clear if these choices give different sequences of $t$ hyperplane splits.
\medskip

{\bf Example 1:} Let us consider the uniform matroid $U_{8,4}$. We take the partition $E_1=\{1,2\}$, $E_2=\{3,4\}$, $E_3=\{5,6\}$, and $E_4=\{7,8\}$. Then $r(M|_{E_i})=2$, $i=1,\dots ,4$. It is easy to check that if we set $a_i=1$ for each $i$ then $E_1\cup E_2\cup E_3\cup E_4$ is a good $4$-partition 
and thus $P(U_{8,3})=P(M_1)\cup P(M_2)\cup P(M_3)\cup P(M_4)$ is a decomposition where

$$\begin{array}{l}
\bases(M_1)=\{B\in \bases(U_{8,4}) : |B\cap \{1,2\}|\le 1 \},\\
\bases(M_2)=\{B\in \bases(U_{8,4}) : |B\cap \{1,2\}|\ge 1,\ |B\cap \{3,4\}|\le 1 \},\\
\bases(M_3)=\{B\in \bases(U_{8,4}) : |B\cap \{1,2\}|\ge 1,\ |B\cap \{3,4\}|\ge 1,\ |B\cap \{5,6\}|\le 1\},\\
\bases(M_4)=\{B\in \bases(U_{8,4}) : |B\cap \{1,2\}|\ge 1,\ |B\cap \{3,4\}|\ge 1,\ |B\cap \{5,6\}|\ge 1 \}.
\end{array}$$

\subsection{Relaxations} Let $M=(E, \bases )$ be a matroid of rank $r$ and let $X\subset E$ be both a circuit and a hyperplane of $M$ (recall that a {\em hyperplane} is a {\em flat}, that is $X=cl(X)=\{e\in E | r(X\cup e)=r(X)\}$, of rank $r-1$). It is known \cite[Proposition 1.5.13]{Oxley} that $\bases(M')=\bases(M)\cup \{X\}$ is the collection of bases of a matroid $M'$ (called,  {\em relaxation} of $M$). 

\begin{corollary}\label{cor11}  Let $M=(E,\bases)$ be a matroid and let $E=\bigcup\limits_{i=1}^tE_i$ be a good $t$-partition. Then, $P(M')$ has a sequence of $t$ hyperplane splits where $M'$ is a relaxation of $M$. 
\end{corollary}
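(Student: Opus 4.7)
The plan is to verify that any good $t$-partition for $M$ remains a good $t$-partition for its relaxation $M'$, after which the corollary follows immediately from Theorem \ref{theo-decomp}. The only new datum introduced in passing from $M$ to $M'$ is one new basis, namely the circuit-hyperplane $X$, so the rank is unchanged and the integers $a_i$ used for $M$ remain natural candidates for $M'$.

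First I would record two simple facts about $M'$. Since $X$ is a hyperplane, $r(X)=r-1$; since $X$ is also a circuit, $|X|=r(X)+1=r$. Hence $|X|=r$ and $r(M')=r(M)=r$. Moreover $\indep(M')=\indep(M)\cup\{X\}$, because every proper subset of the circuit $X$ is already independent in $M$, so no genuinely new independent set appears apart from $X$ itself. For each $i$ we have $r(M'|_{E_i})\ge r(M|_{E_i})>a_i$, so the integers $a_i$ continue to satisfy $0<a_i<r(M'|_{E_i})$, and property $(P1)$ holds for $M'$ with the same $a_i$ since $r(M')=r=\sum_{i=1}^t a_i$.

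The substantive step is to check $(P2)$ for $M'$. In any instance of $(P2)(a)$ or $(P2)(b)$, the independent sets involved have cardinalities bounded by proper partial sums of the $a_i$'s, each of which is strictly less than $r=\sum_{i=1}^t a_i$, because every $a_i\ge 1$ and at least one summand is omitted. Since $|X|=r$, none of these independent sets can coincide with $X$, and therefore each lies in $\indep(M)$. Applying $(P2)$ for $M$ places their union in $\indep(M)\subseteq\indep(M')$, which is exactly what $(P2)$ for $M'$ requires.

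Once $\bigcup_{i=1}^t E_i$ has been shown to be a good $t$-partition for $M'$ with the same integers $a_i$, Theorem \ref{theo-decomp} delivers the desired sequence of $t$ hyperplane splits of $P(M')$. No real obstacle arises; the whole argument hinges on the cardinality identity $|X|=r$, which prevents the single new independent set of $M'$ from ever appearing in the hypotheses of $(P2)$.
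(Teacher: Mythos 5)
Your proof is correct and takes the same approach as the paper's (one-line) proof, which simply asserts that the same good $t$-partition works for $M'$. You supply the missing justification: the key observation that the circuit-hyperplane $X$ has $|X|=r$, while every independent set appearing in the hypotheses of $(P2)$ has cardinality bounded by a proper partial sum of the $a_i$'s and hence strictly less than $r$, so the single new independent set of $M'$ never enters those hypotheses and $(P2)$ for $M'$ reduces to $(P2)$ for $M$.
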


\begin{proof} It can be checked that the desired sequence of $t$ hyperplane splits of $P(M')$ can be obtained by using the  same given good $t$ partition $E=\bigcup\limits_{i=1}^tE_i$. 
\end{proof}

We notice that the above result is not the only way to define a sequence of hyperplane splits for relaxations. Indeed it is proved in \cite{CR} that binary  matroids (and thus graphic matroids) do not have hyperplane splits, however there is a sequence of hyperplane splits for relaxations of graphic matroids as it is shown in Example 3 below.

\section{Rank-three matroids: geometric point of view}\label{secc:geom} 

We recall that a matroid of rank three 
on $n$ elements can be represented  geometrically by placing $n$ points on the plane 
such that if three elements form a circuit, then the corresponding points are 
collinear (in such diagram the lines need not be straight).
Then the bases of $M$ are all subsets of points of cardinal $3$ which are not collinear in this
diagram. Conversely, any diagram of points and lines in the plane in which a pair of lines meet in
at most one point represents a unique matroid whose bases are 
those 3-subsets of points which are not collinear in this diagram.
\smallskip

The combinatorial conditions $(P1)$ and $(P2)$ can be translated into geometric 
conditions when $M$ is of rank three. The latter is given by the following two corollaries. 

\begin{corollary}\label{cc1} Let $M$ be a matroid of rank 3 on $E$ and let $E=E_1\cup E_2$ be a partition of the points of the geometric representation of $M$ such that 

1) $r(M|_{E_1})\ge 2$ and $r(M|_{E_2})= 3$;

2) for each line $l$ of $M$, if $|l\cap E_1|\neq\emptyset$, then $|l\cap E_2|\le 1$.

\noindent Then,  $E=E_1\cup E_2$ is a 2-good partition.
\end{corollary}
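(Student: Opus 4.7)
The natural choice is $a_1=1$ and $a_2=2$: then (P1) is immediate, since $a_1+a_2=3=r(M)$, $0<1<2\le r(M|_{E_1})$, and $0<2<3=r(M|_{E_2})$. Because $t=2$, only (P2)(a) (with $j=1$) must be verified, i.e.\ for every $X\in\indep(M|_{E_1})$ with $|X|\le 1$ and every $Y\in\indep(M|_{E_2})$ with $|Y|\le 2$, one must show $X\cup Y\in\indep(M)$.

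The cases $X=\emptyset$ or $Y=\emptyset$ are trivial, so I plan to handle the two substantive sub-cases separately. For $(|X|,|Y|)=(1,2)$, write $X=\{x\}$ and $Y=\{y_1,y_2\}$. Since $\{y_1,y_2\}\in\indep(M|_{E_2})\subseteq\indep(M)$, its closure $l=\mathrm{cl}_M(\{y_1,y_2\})$ is a rank-$2$ flat of $M$ containing the two elements $y_1,y_2\in E_2$. If $|l|=2$, then $l\subseteq E_2$ and trivially $x\notin l$; otherwise $l$ is a line with $|l\cap E_2|\ge 2$, and condition (2) forces $l\cap E_1=\emptyset$, so again $x\notin l$. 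Either way $\{x,y_1,y_2\}$ has rank $3$, hence it is a basis of $M$ and in particular independent.

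The case $(|X|,|Y|)=(1,1)$, with $X=\{x\}$ and $Y=\{y\}$, is the most delicate, since one must rule out that the non-loops $x\in E_1$ and $y\in E_2$ are parallel in $M$. Suppose for contradiction that they are. Since $r(M|_{E_2})=3\ge 2$, one can pick $y'\in E_2$ not parallel to $y$; then $\{y,y'\}$ is independent in $M$ and spans a rank-$2$ flat $l'=\mathrm{cl}_M(\{y,y'\})$. Parallelism of $x$ and $y$ forces $x\in l'$, so $l'\supseteq\{x,y,y'\}$, producing a line with $|l'\cap E_2|\ge 2$ and $l'\cap E_1\ni x$; this violates condition (2), a contradiction. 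Hence $\{x,y\}\in\indep(M)$, completing the verification of (P2)(a) and showing that $E=E_1\cup E_2$ is a good $2$-partition.

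The main obstacle is the case $(1,1)$: condition (2) is stated in terms of lines (rank-$2$ flats) whereas the potential obstruction to $\{x,y\}$ being independent is parallelism, which is a rank-$1$ phenomenon. The key trick is to exploit $r(M|_{E_2})=3$ to produce a second element $y'\in E_2$ non-parallel to $y$, so that the line through $y$ and $y'$ must, by parallelism, also contain $x$, thereby producing exactly the configuration forbidden by condition (2).
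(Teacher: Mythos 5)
Your proof is correct and follows the same approach as the paper's, which simply asserts that $(P2)(a)$ ``can be easily checked'' with $a_1=1$ and $a_2=2$. You supply the verification the paper omits, including the genuinely subtle $(|X|,|Y|)=(1,1)$ sub-case, where you rule out parallelism between an element of $E_1$ and an element of $E_2$ by using $r(M|_{E_2})=3$ to manufacture a line containing at least two $E_2$-points together with a point of $E_1$, contradicting condition $2)$.
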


\begin{proof} $(P2) (a)$ can be easily checked with $a_1=1$ and $a_2=2$.
\end{proof}

{\bf Example 2.} Let $M$ be the rank-3 matroid arising from the configuration of points given in Figure \ref{f1}. 
It can be easily checked that $E_1=\{1,2\}$ and $E_2=\{3,4,5,6\}$ verify the conditions of Corollary \ref{cc1}.
Thus, $E_1\cup E_2$ is a 2-good partition.

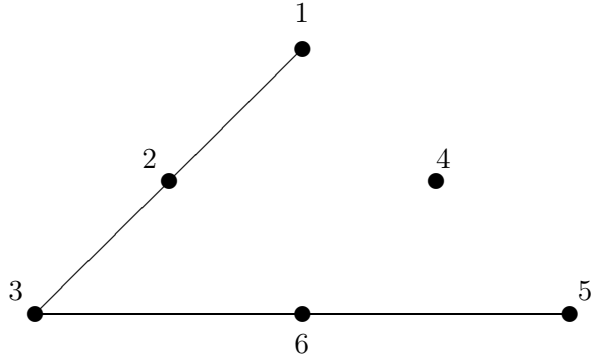
\begin{figure}[h]
\begin{center}
\begin{picture}(200,120)
\put(0,0){\circle*{6}}
\put(200,0){\circle*{6}}
\put(100,100){\circle*{6}}
\put(0,0){\line(1,1){100}}
\put(0,0){\line(1,0){200}}
\put(50,50){\circle*{6}}
\put(150,50){\circle*{6}}
\put(100,0){\circle*{6}}
\put(-10,5){3}
\put(40,55){2}
\put(150,55){4}
\put(97,110){1}
\put(97,-15){6}
\put(203,5){5}
\end{picture}
\vspace{.2cm}
\caption{Set of points in the plane}\label{f1}
\end{center}
\end{figure}

\begin{corollary}\label{cc2} Let $M$ be a matroid of rank 3 on $E$ and let $E=E_1\cup E_2\cup E_3$ be a partition of the points of the geometric representation of $M$ such that 

1) $r(M|_{E_i})\ge 2$ for each $i=1,2,3$,

2) for each line $l$ with at least 3 points of $M$, 

a) if $|l\cap E_1|\neq\emptyset $ then $|l\cap (E_2\cup E_3)|\le 1$,

b) if $|l\cap E_3|\neq\emptyset $ then $|l\cap (E_1\cup E_2)|\le 1$.

\noindent Then,  $E=E_1\cup E_2\cup E_3$ is a 3-good partition.
\end{corollary}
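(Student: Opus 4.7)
My plan is to set $a_1 = a_2 = a_3 = 1$ and verify the defining properties $(P1)$ and $(P2)$ of a good $3$-partition directly. Property $(P1)$ is immediate, as $a_1+a_2+a_3 = 3 = r$ and hypothesis (1) yields $0 < 1 < r(M|_{E_i})$ for each $i$.

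For $(P2)$, the crucial geometric observation is that, in a (simple) rank-$3$ matroid, any subset of $E$ of size at most $3$ is independent \emph{unless} it is a $3$-circuit, which in the Euclidean picture means three collinear points lying on a common line of $M$ (a line having at least three points). Each clause of $(P2)$ therefore reduces to ruling out such a collinear triple across the parts of the partition.

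For $(P2)(a)$ at $j=1$ I would take $X \subseteq E_1$ and $Y \subseteq E_2 \cup E_3$ with $|X| \le a_1 = 1$ and $|Y| \le a_2+a_3 = 2$, both independent in their respective restrictions; a dependence in $X \cup Y$ would exhibit a line $l$ with $|l \cap E_1| \ge 1$ and $|l \cap (E_2 \cup E_3)| \ge 2$, contradicting hypothesis (2)(a). The case $j=2$ is symmetric, using (2)(b) to bound $|l \cap (E_1 \cup E_2)|$. For $(P2)(b)$ the only admissible pair is $(j,k)=(1,2)$: take one point from each $E_i$; a collinear triple then produces a line meeting $E_1$ with $|l \cap (E_2 \cup E_3)| = 2$, again contradicting (2)(a).

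I do not foresee any deeper obstacle; the whole argument is a short case check against the geometric hypotheses. The only subtlety is parallel pairs (size-$2$ circuits), which the wording ``line with at least $3$ points'' does not directly control. This can be dispatched either by assuming the Euclidean representation is simple (the usual convention for such diagrams), or by using hypothesis (1) to adjoin a third point from one of the $E_i$ collinear with the parallel pair, thereby producing a line with $\ge 3$ points that violates (2)(a) or (2)(b).
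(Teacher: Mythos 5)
Your proof is correct and takes exactly the route the paper takes: the paper's own proof is the single line ``$(P2)$ can be easily checked with $a_1=a_2=a_3=1$,'' and your case check is precisely the elided verification. Your observation about parallel pairs is a genuine subtlety the paper glosses over; note that the fix via hypothesis~(1) requires choosing the third point in the right block (e.g.\ for a parallel pair $x\in E_2$, $y\in E_3$ one should take $z\in E_1$ or $E_2$ not parallel to $x$, since a $z\in E_3$ would not violate either (2)(a) or (2)(b)), but the existence of such a $z$ is guaranteed by $r(M|_{E_i})\ge 2$, so the argument goes through.
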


\begin{proof} $(P2)$ can be easily checked with $a_1=a_2=a_3=1$.
\end{proof}

{\bf Example 3.} Let $W^3$ be the 3-whirl on $E=\{1,\dots ,6\}$  shown in Figure \ref{f2}.
$W^3$ is the example given by Billera {\em et al.} \cite{BJR} that we mentioned by the end of the introduction. 
$W^3$ is a relaxation of $M(K_4)$ (by relaxing circuit $\{2,4,6\}$) and it is not graphic.

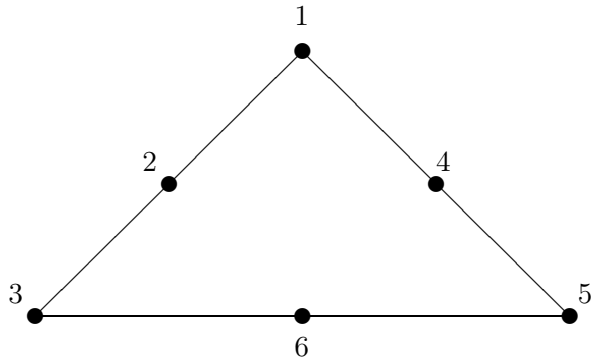
\begin{figure}[h]
\begin{center}
\begin{picture}(200,150)
\put(0,0){\circle*{6}}
\put(200,0){\circle*{6}}
\put(100,100){\circle*{6}}

\put(0,0){\line(1,1){100}}
\put(0,0){\line(1,0){200}}
\put(200,0){\line(-1,1){100}}

\put(50,50){\circle*{6}}
\put(150,50){\circle*{6}}
\put(100,0){\circle*{6}}

\put(-10,5){3}
\put(40,55){2}
\put(150,55){4}
\put(97,110){1}
\put(97,-15){6}
\put(203,5){5}
\end{picture}
\vspace{.2cm}
\caption{Euclidean representation of $W^3$}\label{f2}
\end{center}
\end{figure}

It can be checked that $E_1=\{1,6\}$, $E_2=\{2,5\}$, and $E_3=\{1,4\}$ verify the conditions of Corollary \ref{cc2}. Thus,
$E_1\cup E_2\cup E_3$ is a good 3-partition.

We finally notice that given the 2-good partition $E_1\cup E_2$ of the matroid $M$ in Example 2, we can apply a hyperplane 
split to the matroid $M|_{E_2}$ induced by the set of points in $E_2=\{3,4,5,6\}$. Indeed,  it can be checked that
$E_2^1=\{3,4\}$ and $E_2^2=\{5,6\}$ verify conditions in Corollary \ref{cc1} and thus it is a good 2-partition of $M|_{E_2}$.
Moreover, it can be checked that $E_1=\{1,2\}$, $E_2^1=\{3,4\}$, and $E_2^2=\{5,6\}$ verify the conditions of Corollary \ref{cc2}. and thus $E_1\cup E_2\cup E_3$ is a good 3-partition for $M$.

\section{Direct sum}\label{sec:sum}

Let $M_1=(E_1, \bases )$ and $M_2=(E_2, \bases )$ be matroids of rank $r_1$ and $r_2$ respectively 
where $E_1\cap E_2=\emptyset$.  The {\em direct sum}, denoted by $M_1 \oplus M_2$, of matroids $M_1$ and $M_2$ has 
as ground set the disjoint union $E(M_1 \oplus M_2)=E(M_1)\cup E(M_2)$ and as set of bases  
$\bases(M_1 \oplus M_2)=\{B_1\cup B_2 | B_1\in \bases (M_1),B_2\in \bases (M_2)\}$. Further, the rank of $M_1 \oplus M_2$ is $r_1+r_2$. 

In \cite{CR}, we proved the following result.

\begin{theorem}\label{theo-sum1}\cite{CR} Let $M_1=(E_1, \bases )$ and $M_2=(E_2, \bases )$ be matroids of rank $r_1$ and $r_2$ respectively where $E_1\cap E_2=\emptyset$. Then, $P(M_1 \oplus M_2)$ has a hyperplane split if and only if either  $P(M_1)$ or $P(M_2)$ has a hyperplane split.
\end{theorem}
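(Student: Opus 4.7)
The plan is to establish the two implications separately: the forward direction is essentially a direct product computation, while the converse rests on a structural analysis of how a splitting hyperplane must interact with the direct sum structure.

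For the \emph{if} direction, assume without loss of generality that $P(M_1)=P(N_1)\cup P(N_2)$ is a hyperplane split given by the affine equation $\sum_{i\in A}x_i=k$ with $A\subseteq E_1$. Define $N'_i:=N_i\oplus M_2$ for $i=1,2$. From the definition of direct sum we have $\bases(N'_i)=\{B\cup B_2:B\in\bases(N_i),\,B_2\in\bases(M_2)\}$, so $\bases(M_1\oplus M_2)=\bases(N'_1)\cup\bases(N'_2)$ immediately. Since $A\subseteq E_1$, the sum $\sum_{i\in A}x_i$ depends only on the $E_1$-part of a base, so the same hyperplane, now viewed in $\R^{E_1\cup E_2}$, separates $P(N'_1)$ from $P(N'_2)$. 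A short product computation gives $P(N'_1)\cap P(N'_2)=(P(N_1)\cap P(N_2))\times P(M_2)$, which has codimension one in $P(M_1\oplus M_2)$; hence we have a hyperplane split.

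For the \emph{only if} direction, the key ingredient is the standard characterization: a matroid $N$ on $E_1\sqcup E_2$ equals $N|_{E_1}\oplus N|_{E_2}$ if and only if every base $B$ satisfies $|B\cap E_j|=r(N|_{E_j})$ for $j=1,2$ (equivalently $r(N)=r(N|_{E_1})+r(N|_{E_2})$). Since every base of $M_1\oplus M_2$ satisfies $|B\cap E_j|=r_j:=r(M_j)$, the bases of any matroid $N$ whose polytope is a piece of a hyperplane split of $P(M_1\oplus M_2)$ inherit the same property. Consequently, if $P(M_1\oplus M_2)=P(N_1)\cup P(N_2)$ is a hyperplane split, each $N_i$ factors as $N_i=N_i^{(1)}\oplus N_i^{(2)}$ with $N_i^{(j)}$ a matroid on $E_j$ of rank $r_j$.

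Setting $S_i:=\bases(N_i^{(1)})\subseteq\bases(M_1)$ and $T_i:=\bases(N_i^{(2)})\subseteq\bases(M_2)$, the hypothesis becomes the set identity $\bases(M_1)\times\bases(M_2)=(S_1\times T_1)\cup(S_2\times T_2)$. An elementary argument — if $B_1\in\bases(M_1)\setminus S_1$ then $(B_1,B_2)\in S_2\times T_2$ for every $B_2\in\bases(M_2)$, forcing $T_2=\bases(M_2)$, and similarly with indices permuted — yields the dichotomy: either $T_1=T_2=\bases(M_2)$, so $\bases(M_1)=S_1\cup S_2$ gives a decomposition of $P(M_1)$; or $S_1=S_2=\bases(M_1)$, giving a decomposition of $P(M_2)$; or one $N_i$ coincides with $M_1\oplus M_2$, contradicting the nontriviality of the split. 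The main obstacle is twofold: first, ruling out the \emph{mixed} configuration in which both $S_i\subsetneq\bases(M_1)$ and $T_{i'}\subsetneq\bases(M_2)$ occur, which requires the chain of implications above to be carried out carefully; and second, verifying that the induced decomposition on the relevant factor is again a \emph{hyperplane} split. The latter follows from the identity $P(N_1)\cap P(N_2)=(P(N_1^{(j)})\cap P(N_2^{(j)}))\times P(M_{j'})$ (with $j'\ne j$), which translates codimension one of the intersection in the product polytope into codimension one on the factor, yielding the desired hyperplane split of $P(M_j)$.
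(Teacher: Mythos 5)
The paper gives no proof of this statement; it is cited from [CR], and the only trace of the argument in the present source is in the proof of Theorem~\ref{theo-sum}, where the product pieces are defined as $L_i=N_i\oplus M_2$ --- exactly your $N'_i$. So your \emph{if} direction matches the construction the paper references, while your \emph{only if} direction is your own and appears correct. Two small remarks. In the forward direction, the claim that the splitting hyperplane has the special form $\sum_{i\in A}x_i=k$ is both unnecessary and itself a nontrivial fact about matroid subdivisions: any affine hyperplane in $\R^{E_1}$, pulled back to $\R^{E_1\cup E_2}$ by ignoring the $E_2$-coordinates, separates $P(N'_1)$ from $P(N'_2)$ just as well, so you can drop that assertion. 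In the converse, the passage from ``every base $B$ of $N_i$ has $|B\cap E_j|=r_j$'' to ``$N_i=N_i|_{E_1}\oplus N_i|_{E_2}$'' via the direct-sum characterization tacitly uses $r(N_i|_{E_j})=r_j$, which needs both $r(N_i|_{E_j})\ge r_j$ (take $B\cap E_j$ for any base $B$) and $r(N_i|_{E_j})\le r_j$ (because $\bases(N_i)\subseteq\bases(M_1\oplus M_2)$ forces every subset of $E_j$ independent in $N_i$ to be independent in $M_j$); this deserves a sentence. With those points made explicit, the dichotomy ruling out the mixed configuration and the identity $(A_1\times B)\cap(A_2\times B)=(A_1\cap A_2)\times B$ correctly transfer the face-to-face condition to the surviving factor, and the proof stands.
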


Our main result in this section is the following.

\begin{theorem}\label{theo-sum} Let $M_1=(E_1, \bases )$ and $M_2=(E_2, \bases )$ be matroids of rank $r_1$ and $r_2$ respectively 
where $E_1\cap E_2=\emptyset$. Then, $P(M_1 \oplus M_2)$ admits a sequence of  hyperplane splits if either 
$P(M_1)$ or $P(M_2)$ admits a sequence of hyperplane splits.
\end{theorem}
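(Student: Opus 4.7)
The plan is to lift the sequence of hyperplane splits of $P(M_1)$ (assumed without loss of generality, by symmetry) to a sequence for $P(M_1\oplus M_2)$, exploiting the fact that $P(N\oplus M_2)=P(N)\times P(M_2)$ as a polytope in $\R^{E_1}\times \R^{E_2}=\R^{E_1\cup E_2}$ for any matroid $N$ on $E_1$. This identification is immediate: a basis of $N\oplus M_2$ is a disjoint union $B_1\cup B_2$ with $B_1\in\bases(N)$ and $B_2\in\bases(M_2)$, and its incidence vector is $(e_{B_1},e_{B_2})$. In particular, the affine hull of $P(N\oplus M_2)$ is the product of the affine hulls of $P(N)$ and $P(M_2)$.

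The key lemma I would establish is: \emph{if $P(N)=P(K_1)\cup P(K_2)$ is a hyperplane split with common face $P(L)=P(K_1)\cap P(K_2)$, then $P(N\oplus M_2)=P(K_1\oplus M_2)\cup P(K_2\oplus M_2)$ is also a hyperplane split, with common face $P(L\oplus M_2)$.} The proof is geometric: the splitting hyperplane $H$ inside the affine hull of $P(N)$ lifts to the product $H\times(\text{affine hull of }P(M_2))$, a hyperplane in the affine hull of $P(N\oplus M_2)=P(N)\times P(M_2)$. This lifted hyperplane separates the two pieces $P(K_i)\times P(M_2)=P(K_i\oplus M_2)$, and intersects $P(N\oplus M_2)$ in $(P(K_1)\cap P(K_2))\times P(M_2)=P(L\oplus M_2)$ --- a matroid base polytope that is a face of each $P(K_i\oplus M_2)$, since the product of a face of $P(K_i)$ with $P(M_2)$ is itself a face of the Cartesian product.

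I would then apply the key lemma iteratively along the given sequence of hyperplane splits for $P(M_1)$, so that every splitting $P(N)=P(K_1)\cup P(K_2)$ of a piece at some stage gets lifted to the splitting $P(N\oplus M_2)=P(K_1\oplus M_2)\cup P(K_2\oplus M_2)$. If the original sequence yields the decomposition $P(M_1)=\bigcup_k P(N_k)$, the lifted sequence yields $P(M_1\oplus M_2)=\bigcup_k P(N_k\oplus M_2)$. All pairwise intersections have the form $P(N_k\oplus M_2)\cap P(N_l\oplus M_2)=(P(N_k)\cap P(N_l))\times P(M_2)=P(L_{kl}\oplus M_2)$, where $P(L_{kl})=P(N_k)\cap P(N_l)$ is already a matroid base polytope common face in the $M_1$-decomposition; hence each such intersection is a matroid base polytope and a face of both relevant pieces, as required.

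The step that requires care is the key lemma --- specifically, the verification that the lifted hyperplane defines a genuine hyperplane split of $P(N\oplus M_2)$ in the sense of the paper (common face being a matroid base polytope and a face of both pieces). Once this is settled, the rest is an induction on the length of the sequence, which is essentially bookkeeping. The lemma can alternatively be obtained by tracking the direct sum structure through the construction underlying the ``if'' direction of Theorem \ref{theo-sum1} in \cite{CR}.
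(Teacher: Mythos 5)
Your proposal takes essentially the same route as the paper: lift each hyperplane split of $P(M_1)$ to $P(M_1\oplus M_2)$ by tensoring with $M_2$, so that the piece $P(N)$ becomes $P(N\oplus M_2)$, and iterate along the sequence. The paper phrases this combinatorially (working with $\bases(N_i\oplus M_2)$ and invoking the $t=2$ case from Theorem~\ref{theo-sum1} as the inductive step), whereas you make the Cartesian-product identification $P(N\oplus M_2)=P(N)\times P(M_2)$ explicit, which has the minor advantage of making the "common intersection is a face of both pieces" condition immediate rather than implicit; the content is the same.
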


\begin{proof} Without loss of generality, we suppose that $P(M_1)$ has a sequence of  hyperplane splits
yielding to the decomposition $P(M_1)=\bigcup\limits_{i=1}^{t}P(N_i)$. For each $i=1,\dots ,t$, we let   

$\hspace{4cm} L_i=\{X\cup Y : X\in\bases(N_i), Y\in\bases(M_2)\}.$

\noindent Since $N_i$ and $M_2$ are matroids, $L_i$ is also the matroid given by $N_i\oplus M_2$. 

Now for all $1\le i,j\le t$, $i\neq j$ we have

$\hspace{3cm} L_i\cap L_j=\{X\cup Y : X\in\bases(N_i)\cap \bases(N_j), Y\in \bases(M_2)\}$ 

Since $\bases(N_i)\cap \bases(N_j)=\bases(N_i\cap N_j)$ and $M_2$ are matroids, $L_i\cap L_j$
is also a matroid given by $(¤N_i\cap N_j)\oplus M_2$. Moreover, $P(M_1)=\bigcup\limits_{i=1}^tP(N_i)$ so
$\bases(M_1)=\bigcup\limits_{i=1}^t\bases(N_i)$ and thus
$$\begin{array}{ll}
\bigcup\limits_{i=1}^t L_i & =\{X\cup Y : X\in \bigcup\limits_{i=1}^t\bases(N_i), Y\in\bases(M_2)\}\\
& =\{X\cup Y : X\in \bases(M_1), Y\in\bases(M_2)\}\\
& = \bases(M_1\oplus M_2).\\
\end{array}$$  

We now show that this matroid base decomposition induces 
a $t$-decomposition of $P(M_1\oplus M_2)$.  Indeed, we claim that $P(M_1\oplus M_2)=\bigcup\limits_{i=1}^{t}P(L_i)$.
For, we proceed by induction on $t$.  The case $t=2$ is true since, in the proof of Theorem \ref{theo-sum1}, 
was showed that $P(M_1 \oplus M_2)=P(L_1)\cup P(L_2)$. We suppose that the result is true for $t$ and let

\begin{equation}\label{eqq1}
P(M_1)=\bigcup\limits_{i=1}^{t-1}P(N_i)\cup P(N_{t}^1)\cup P(N_{t}^2),
\end{equation}

\noindent where $N_i$, $i=1,\dots t-1$, $N_t^1,N_t^2$ are matroids. Moreover, we suppose that throughout the sequence of hyperplane splits of $P(M_1)$ we had  
$P(M_1)=\bigcup\limits_{i=1}^{t}P(N_i)$ and that the last hyperplane split was applied to $P(N_{t})$ (obtaining
$P(N_{t})=P(N_t^1)\cup P(N_t^2)$) and yielding to equation \eqref{eqq1}. 

Now, by the inductive hypothesis, the decomposition $P(M_1)=\bigcup\limits_{i=1}^{t}P(N_i)$ implies the decomposition
$P(M_1\oplus M_2)=\bigcup\limits_{i=1}^{t}P(L_i)$. But, by the case $t=2$, $P(N_{t})=P(N_t^1)\cup P(N_t^2)$ implying the decomposition $P(N_t\oplus M_2)=P(L_t^1)\cup P(L_t^2)$ where
$$L_t^1=\{X\cup Y : X\in\bases(N_t^1), Y\in\bases(M_2)\} \text{ and } L_t^2=\{X\cup Y : X\in\bases(N_t^2), Y\in\bases(M_2)\}$$ 

Therefore,
$$P(M_1\oplus M_2)=\bigcup\limits_{i=1}^{t}P(L_i)=\bigcup\limits_{i=1}^{t-1}P(L_i)\cup P(L_{t}^1)\cup P(L_{t}^2).$$
\end{proof}

{\em Acknowledgement}

%The second author was supported by the ANR TEOMATRO grant ANR-10-BLAN 0207.
We would like to thank the referee for many valuable remarks.

\end{document}